\numberwithin{equation}{section}
\begin{document}

\title{Bounds for zeros of a polynomial using numerical radius  of Hilbertian  space operators}
\author{Pintu Bhunia \and
 Santanu Bag \and Kallol Paul %etc.
}

%\dedication{This paper is dedicated to Professor MSM}

\institute{Pintu Bhunia \at
Department of Mathematics, Jadavpur University, Kolkata 700032, West Bengal, India\\
 \email{pintubhunia5206@gmail.com}
 \and
 Santanu Bag \at
Department of Mathematics, Vivekananda College For Women, Barisha, Kolkata 700008, India\\
 \email{santanumath84@gmail.com} % \\
% \emph{Present address:} of S. Author if needed
\and
Kallol Paul \at
Department of Mathematics, Jadavpur University, Kolkata 700032, West Bengal, India\\
\email{kalloldada@gmail.com}
}

\date{Received: date / Accepted: date}
% The correct dates will be entered by the editor

%Note: Grants, general acknowledgments or other notes about the article should be placed at the end of the article.}
\maketitle

\begin{abstract}
%Normally, Banach J. Math. Anal. (BJMA) publishes papers with 15 pages or longer, Ann. Funct. Anal. (AOFA) publishes papers with 20 pages or fewer, and Adv. Oper. Theory (AIOT) without any restriction on the number of pages. 
We obtain bounds for the numerical radius of $2 \times 2$ operator matrices which improve on the existing bounds. We also show that the inequalities obtained here generalize the existing ones. As an application of the results obtained here we estimate the bounds for the zeros of a monic polynomial and  illustrate with numerical examples  that the bounds are better than the existing ones.
\end{abstract}

\keywords{Numerical radius \and Operator matrix \and Zeros of polynomial}

\subclass{ 47A12 \and 15A60 \and 26C10}
%47A12, 15A60, 26C10
\section{Introduction}
\label{intro}
We consider a monic polynomial $p(z)=z^n+a_{n-1}z^{n-1}+\ldots+a_1z+a_0$ of degree $n$,  with complex coefficients $a_0, a_1, \ldots, a_{n-1}.$ When $n$ varies from $1$ to $4$, we can exactly compute the zeros of the polynomial $p(z)$. But for $n \geq 5$, there is no general method to compute the zeros of the polynomial $p(z)$ and for this reason the estimation of bounds for the zeros of polynomials becomes more interesting. Several classical bounds for the zeros of a given monic polynomial have been obtained by different mathematicians over the years using different approaches. To mention a few of them are Cauchy \cite{CM}, Fujii and Kubo \cite{FK1}, Alpin et al. \cite{YML}, Kittaneh \cite{K}, Linden \cite{L}. One of the important technique to obtain bounds for the zeros of the polynomial $p(z)$ is to obtain bounds for the numerical radius of the Frobenius companion matrix $C(p)$ of $p(z),$ where \begin{eqnarray*}
  C(p)&=&\left(\begin{array}{ccccccc}
    -a_{n-1}&-a_{n-2}&.&.&.&-a_1&-a_0 \\
    1&0&.&.&.&0&0\\
		0&1&.&.&.&0&0\\
    .& & & & & & \\
    .& & & & & &\\
    .& & & & & &\\
    0&0&.&.&.&1&0
    \end{array}\right).
		\end{eqnarray*}
Using the numerical radius inequalities of the Frobenius companion matrix of a given monic polynomial, Abu-Omar and Kittaneh \cite{AF}, M. Al-Dolat et al. \cite{MKMF}, Bhunia et al. \cite{BBP} obtained various bounds for the zeros of that polynomial. We here obtain bounds for the zeros of the polynomial $p(z)$ and give examples to  show that they are better than the existing ones. Before we proceed further we first discuss the necessary notation and terminology.\\
 
\noindent Let $\mathcal{H}$ be a complex Hilbert space with inner product $\langle .,. \rangle$ and let $\mathcal{B}(\mathcal{H})$ denote the $C^{*}$-algebra of all bounded linear operators on $H$.   Also let  $B(\mathcal{H}_1, \mathcal{H}_2)$ be the set of all bounded linear operators from $\mathcal{H}_1$ to $\mathcal{H}_2$. Here $\mathcal{H}_1$ and $ \mathcal{H}_2$ are two complex Hilbert spaces. If $\mathcal{H}_1=\mathcal{H}_2=\mathcal{H}$ then we write $\mathcal{B}(\mathcal{H}_1, \mathcal{H}_2)=\mathcal{B}(\mathcal{H})$.  The numerical range of $T\in \mathcal{B}(\mathcal{H}),$ denoted as $ W(T)$,  is  defined as $$ W(T)=\{ \langle Tx,x \rangle: x \in H,~ \|x\|=1\},$$ where $\|.\|$ denotes the norm  on $\mathcal{H}$ induced by the inner product $\langle .,. \rangle$. For any bounded linear operator $T$, the numerical radius, denoted as $w(T)$ and the Crawford number, denoted as $m(T)$ are defined respectively, as
\begin{eqnarray*}
 w(T)&=&\sup\{ |\lambda|: \lambda \in W(T)\}
\end{eqnarray*}
and 
\begin{eqnarray*}
 m(T)&=&\inf\{ |\lambda|: \lambda \in W(T)\}.
\end{eqnarray*}
Let $\rho(T)$ and $\|T\|$ denote the spectral radius and the operator norm of $T$, respectively. It is easy to see that $\rho(T)\leq w(T)\leq \|T\|.$ 
Also, $w(.)$ is a norm on $\mathcal{B}(\mathcal{H})$ that satisfies the inequality
\begin{eqnarray}\label{number0}  
 \frac{1}{2}\|T\|&\leq& w(T)\leq \|T\|.
\end{eqnarray}
Note that this implies that $w(.)$ is equivalent to the operator norm. Various numerical radius inequalities improving (\ref{number0}) have been studied in \cite{BBP}, \cite{OFK}, \cite{KMR}, \cite{PB2}, \cite{PB1},  \cite{KS}, \cite{TY}.  Kittaneh \cite{FK3} improved on the inequality (\ref{number0}) to prove that 
\begin{eqnarray}\label{number1}
\frac{1}{4}\left\|T^{*}T+TT^{*}\right\|\leq w^2(T)\leq \frac{1}{2}\left\|T^{*}T+TT^{*}\right\|,  
\end{eqnarray}
which was further improved in \cite{BBP1} as 
\begin{eqnarray}\label{number2}
	\frac{1}{4}m\left(\left(\mbox{Re}\left(T^2\right)\right)^2\right)+\frac{1}{16}  \left\|T^{*}T+TT^{*} \right\|^2  \leq w^4(T)
\end{eqnarray}
\begin{eqnarray}\label{number3}
	\mbox{and}~~w^4(T) \leq  \frac{1}{2}w^2(T^2)+\frac{1}{8} \left\|T^{*}T+TT^{*} \right\|^2.
	\end{eqnarray}

\noindent We know that the zeros of the polynomial $p(z)$ are  exactly the eigenvalues of $C(p)$. Therefore,  if $\lambda$ is a zero of the polynomial  $p(z),$ then $|\lambda|\leq w(C(p))$. Thus if we can obtain better bounds for the numerical radius of an operator then we can definitely improve on the bounds for the zeros of a given polynomial.\\

\noindent In  this paper, we establish the numerical radius inequalities for $2 \times 2$ operator matrices which generalize and improve on the inequalities (\ref{number1}), (\ref{number2}) and (\ref{number3}) .
% and (1.4). We also obtain a numerical radius inequality for $2 \times 2$ operator matrices which generalizes and improves on the inequality (\ref{number1}).  
 Using these inequalities, we obtain upper bounds for the numerical radius of the product of two operators. We  develop upper bounds for the norm of the product of two positive operators and that of the sum of two operators. Also we show with numerical examples that these bounds improve on the bound obtained by Shebrawi \cite{KS}. Finally, as an application of these numerical radius inequalities of $2 \times 2$ operator matrices,  we estimate bounds for the zeros of a monic polynomial. We show with numerical examples that these estimations improve on the existing estimations.

\section{On the bounds of numerical radius}\label{sec1}

We first introduce the notations  $H_{\theta}$ and $K_{\theta}$.
For $T \in B(\mathbb{H})$ and $\theta \in \mathbb{R},$  let $H_{\theta}=\textit{Re}(e^{i\theta}T)$ and $K_{\theta}=\textit{Im}(e^{i\theta}T)$.  
The following  lemma  will be  used  repeatedly to reach our goal in this present article.

\begin{lemma}[\cite{TY}] \label{lemma:lemma1}
	Let $T \in \mathcal{B}(\mathcal{H})$, then \[w(T)=\sup_{\theta \in \mathbb{R}}\|H_{\theta} \| ~~\mbox{and}~~ w(T)=\sup_{\theta \in \mathbb{R}}\| K_{\theta} \|.\]
\end {lemma}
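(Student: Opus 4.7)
The plan is to reduce the norm of each self-adjoint operator $H_\theta$ to its numerical radius, then interchange suprema. Since $T^* \mapsto T^*$ is an involution, the operator
\[
H_\theta = \frac{1}{2}\bigl(e^{i\theta}T + e^{-i\theta}T^*\bigr)
\]
is self-adjoint, so I can invoke the standard identity $\|A\| = \sup_{\|x\|=1}|\langle Ax,x\rangle|$ valid for self-adjoint $A$. A direct computation of the inner product gives
\[
\langle H_\theta x,x\rangle = \operatorname{Re}\bigl(e^{i\theta}\langle Tx,x\rangle\bigr),
\]
so that $\|H_\theta\| = \sup_{\|x\|=1}\bigl|\operatorname{Re}(e^{i\theta}\langle Tx,x\rangle)\bigr|$.

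Taking the supremum over $\theta \in \mathbb{R}$ and swapping the two suprema (which is unconditional, since we are just taking the joint supremum over the pair $(\theta,x)$), I obtain
\[
\sup_{\theta\in\mathbb{R}}\|H_\theta\| = \sup_{\|x\|=1}\sup_{\theta\in\mathbb{R}}\bigl|\operatorname{Re}(e^{i\theta}\langle Tx,x\rangle)\bigr|.
\]
For each fixed unit vector $x$, the inner supremum is maximized by choosing $\theta$ so that $e^{i\theta}\langle Tx,x\rangle$ is a nonnegative real number, giving exactly $|\langle Tx,x\rangle|$. Thus the right-hand side equals $\sup_{\|x\|=1}|\langle Tx,x\rangle| = w(T)$, which is the first identity.

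For the second identity, I would observe that $K_\theta = \operatorname{Im}(e^{i\theta}T) = \operatorname{Re}(e^{i(\theta-\pi/2)}T) = H_{\theta-\pi/2}$, so $\sup_\theta \|K_\theta\| = \sup_\theta \|H_\theta\| = w(T)$. Alternatively, the same argument applied to $K_\theta$ directly yields $\langle K_\theta x,x\rangle = \operatorname{Im}(e^{i\theta}\langle Tx,x\rangle)$, and the same optimization over $\theta$ again recovers $|\langle Tx,x\rangle|$.

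There is no real obstacle here; the only thing one must be careful about is the interchange of suprema and justifying the pointwise optimization in $\theta$ for each fixed $x$, both of which are immediate. The proof is essentially packaging the elementary fact that the modulus of a complex number is the supremum of the real parts of its unimodular rotations.
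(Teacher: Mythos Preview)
Your argument is correct and is the standard proof of this well-known identity. Note that the paper itself does not supply a proof of this lemma; it is stated with a citation to Yamazaki \cite{TY}, so there is no ``paper's own proof'' to compare against. Your reasoning---using that $H_\theta$ is self-adjoint so that $\|H_\theta\|=\sup_{\|x\|=1}|\langle H_\theta x,x\rangle|$, computing $\langle H_\theta x,x\rangle=\operatorname{Re}(e^{i\theta}\langle Tx,x\rangle)$, interchanging suprema, and optimizing in $\theta$---is exactly the argument one finds in the cited reference, and the observation $K_\theta=H_{\theta-\pi/2}$ cleanly handles the second identity.
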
		
	
Now we are in a position to prove the following inequalities for the numerical radius of  $2\times 2$ operator matrices which generalize the existing inequalities. 

\begin{theorem}\label{theorem:upperlower1}
Let $X \in \mathcal{B}(\mathcal{H}_2,\mathcal{H}_1), Y \in \mathcal{B}(\mathcal{H}_1,\mathcal{H}_2)$. Then
\begin{eqnarray*}
w^{2}\left(\begin{array}{cc}
    0&X \\
    Y&0
 \end{array}\right)  &\geq& \frac{1}{4}\max \big\{\| XX^{*}+Y^{*}Y\|, \| X^{*}X+YY^{*}\|\big\}, \\ 
w^{2}\left(\begin{array}{cc}
    0&X \\
    Y&0
 \end{array}\right) 
&\leq& \frac{1}{2}\max \big\{\| XX^{*}+Y^{*}Y\|, \| X^{*}X+YY^{*}\|\big\}. 
\end{eqnarray*}
\end{theorem}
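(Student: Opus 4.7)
The plan is to apply Kittaneh's inequality (\ref{number1}) directly to the operator matrix $T=\begin{pmatrix} 0 & X \\ Y & 0 \end{pmatrix}$, since the right-hand sides of the claimed inequalities match exactly what one obtains from $\tfrac{1}{4}\|T^*T+TT^*\|$ and $\tfrac{1}{2}\|T^*T+TT^*\|$ after a short block computation.

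First I would compute $T^*=\begin{pmatrix} 0 & Y^* \\ X^* & 0 \end{pmatrix}$ and then multiply to obtain
\[
TT^{*}=\begin{pmatrix} XX^{*} & 0 \\ 0 & YY^{*}\end{pmatrix}, \qquad T^{*}T=\begin{pmatrix} Y^{*}Y & 0 \\ 0 & X^{*}X\end{pmatrix},
\]
so that their sum is the block-diagonal operator
\[
T^{*}T+TT^{*}=\begin{pmatrix} XX^{*}+Y^{*}Y & 0 \\ 0 & X^{*}X+YY^{*}\end{pmatrix}.
\]
Since the operator norm of a block-diagonal operator equals the maximum of the norms of its diagonal blocks, this gives
\[
\|T^{*}T+TT^{*}\|=\max\bigl\{\|XX^{*}+Y^{*}Y\|,\,\|X^{*}X+YY^{*}\|\bigr\}.
\]

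Next I would invoke Kittaneh's inequality (\ref{number1}) applied to $T\in B(H_1\oplus H_2)$, namely
\[
\tfrac{1}{4}\|T^{*}T+TT^{*}\|\leq w^{2}(T)\leq \tfrac{1}{2}\|T^{*}T+TT^{*}\|,
\]
and substitute the expression above for $\|T^{*}T+TT^{*}\|$. Both the lower and upper bounds in the theorem follow immediately.

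There is no real obstacle here; the only thing to be careful about is ensuring the block multiplications are carried out in the correct order (since $X$ and $Y$ act between different spaces) and verifying that the norm of a diagonal $2\times 2$ block operator on $H_1\oplus H_2$ is genuinely the maximum of the norms of the diagonal entries, which follows from $\|A\oplus B\|=\max\{\|A\|,\|B\|\}$ for bounded operators. Lemma~\ref{lemma:lemma1} is not needed for this theorem; it will be the tool for the sharper inequalities that presumably follow.
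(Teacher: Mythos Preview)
Your argument is correct and in fact more economical than the paper's. The paper does not invoke inequality~(\ref{number1}) but instead works from scratch via Lemma~\ref{lemma:lemma1}: it sets $H_{\theta}=\textit{Re}(e^{i\theta}T)$, $K_{\theta}=\textit{Im}(e^{i\theta}T)$, computes $H_{\theta}^{2}+K_{\theta}^{2}=\frac{1}{2}\operatorname{diag}(A,B)$ with $A=XX^{*}+Y^{*}Y$, $B=X^{*}X+YY^{*}$, and then uses $\|H_{\theta}^{2}+K_{\theta}^{2}\|\leq 2w^{2}(T)$ for the lower bound and $H_{\theta}^{2}\leq \frac{1}{2}\operatorname{diag}(A,B)$ together with $\sup_{\theta}\|H_{\theta}\|=w(T)$ for the upper bound. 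Since $H_{\theta}^{2}+K_{\theta}^{2}=\tfrac{1}{2}(T^{*}T+TT^{*})$ for any operator, the two routes are mathematically equivalent; your approach simply cites Kittaneh's result as a black box, while the paper re-derives it. What your version makes transparent is that Theorem~\ref{theorem:upperlower1} is not so much a generalization of~(\ref{number1}) as a reformulation of it for off-diagonal $2\times 2$ blocks; what the paper's version buys is that the $H_{\theta}$--$K_{\theta}$ machinery is already in place for the genuinely sharper Theorem~\ref{theorem:upperlower2}, where one needs $H_{\theta}^{4}+K_{\theta}^{4}$ and no direct analogue of~(\ref{number1}) is available.
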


\begin{proof} Let $T=\left(\begin{array}{cc}
    0&X \\
    Y&0
 \end{array}\right)$ and $H_{\theta}=\textit{Re}(e^{i\theta}T)$, $K_{\theta}=\textit{Im}(e^{i\theta}T).$ An easy calculation gives
\[H^2_{\theta}+K^2_{\theta}=\frac{1}{2}\left(\begin{array}{cc}
    A&0 \\
    0&B
 \end{array}\right),\] where $A=XX^{*}+Y^{*}Y$, $B=X^{*}X+YY^{*}$.
Therefore, using Lemma \ref{lemma:lemma1}, we get $$\frac{1}{2}\Big\|\left(\begin{array}{cc}
    A&0 \\
    0&B
 \end{array}\right)\Big\|=\|H^2_{\theta}+K^2_{\theta}\|\leq \|H_{\theta}\|^2+\|K_{\theta}\|^2\leq 2w^2(T).$$ This shows that 
$$\frac{1}{2}\max \big\{\| A\|, \| B\|\big\}\leq 2w^2(T).$$ 
This completes the proof of the first inequality of the theorem.\\
Again, from $H^2_{\theta}+K^2_{\theta}=\frac{1}{2}\left(\begin{array}{cc}
   A&0 \\
    0&B
 \end{array}\right) $,~~\mbox{we have}  $$H^2_{\theta}-\frac{1}{2}\left(\begin{array}{cc}
    A&0 \\
    0&B
 \end{array}\right)=-K^2_{\theta}\leq 0.$$
Therefore, $$ H^2_{\theta}\leq\frac{1}{2}\left(\begin{array}{cc}
    A&0 \\
    0&B
 \end{array}\right)$$ and so,  
$$\|H_{\theta}\|^2\leq\frac{1}{2}\Big\|\left(\begin{array}{cc}
    A&0 \\
    0&B
 \end{array}\right)\Big\|=\frac{1}{2}\max \big\{\| A\|, \|B\|\big\}.$$
Taking supremum over $\theta\in \mathbb{R}$ and then using Lemma \ref{lemma:lemma1}, we get
$$w^{2}(T) \leq \frac{1}{2}\max \big\{\| A\|, \|B\|\big\}.$$ This completes the proof of the second inequality of the theorem. 
\end{proof}

\begin{remark}\label{remark1}
(i) It is well known that if $\mathcal{H}_1=\mathcal{H}_2$ and $X=Y$ then $w\left(\begin{array}{cc}
    0&X \\
    Y&0
 \end{array}\right)=w(X)$. Using this result and taking $X=Y=T$ (say) in Theorem \ref{theorem:upperlower1}, we get 
 \[ \frac{1}{4} \left\|T^{*}T+TT^{*}\right\| \leq w^2(T) \leq \frac{1}{2}\left\|T^{*}T+TT^{*}\right\|,  \]
 which is the inequality (\ref{number1}).\\
 (ii) In \cite[Th. 2.6]{BPN}, we obtained analogous bounds for  $A$-numerical radius in the setting of  semi-Hilbertian space operators.

\end{remark}

Using  Theorem \ref{theorem:upperlower1} we now obtain an upper bound for the norm of the sum of two bounded linear operators.
\begin{theorem} \label{theorem1}
Let $X, Y \in \mathcal{B}(\mathcal{H}).$ Then
\[\left\|X+Y\right\|^2\leq 2\max \big\{\| XX^{*}+YY^{*}\|, \| X^{*}X+Y^{*}Y\|\big\}.\]
\end{theorem}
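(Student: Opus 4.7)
The plan is to recover $X+Y$ as (twice) the real part of a suitable $2\times 2$ off-diagonal operator matrix and then apply the upper bound of Theorem \ref{theorem:upperlower1}. Specifically, I would set
\[
T = \begin{pmatrix} 0 & X^* \\ Y & 0 \end{pmatrix} \in B(H \oplus H),
\]
so that the self-adjoint part $H_0 = \textit{Re}(T) = \tfrac{1}{2}(T+T^*)$ is the off-diagonal block matrix with $X+Y$ sitting in the lower-left corner and $X^*+Y^*$ in the upper-right corner.

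First, I would observe that the operator norm of an off-diagonal block matrix $\begin{pmatrix} 0 & A \\ B & 0 \end{pmatrix}$ equals $\max\{\|A\|,\|B\|\}$ (a one-line computation on $H\oplus H$ using $\|v\|^2 = \|v_1\|^2 + \|v_2\|^2$). Applied to $H_0$, this yields $\|H_0\| = \tfrac{1}{2}\|X+Y\|$. Since Lemma \ref{lemma:lemma1} gives $\|H_0\| \le w(T)$ (take $\theta = 0$ in the supremum), I obtain $\|X+Y\| \le 2w(T)$, hence $\|X+Y\|^2 \le 4w^2(T)$.

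Next, I would apply the upper bound of Theorem \ref{theorem:upperlower1} to $T$, where the roles of $X$ and $Y$ in the statement of that theorem are played by $X^*$ and $Y$, respectively. The corresponding matrices become $A = (X^*)(X^*)^* + Y^*Y = XX^* + Y^*Y$ wait -- in Theorem \ref{theorem:upperlower1} the matrices are $A = XX^*+Y^*Y$ and $B = X^*X + YY^*$, so with the substitution $X\mapsto X^*$, $Y\mapsto Y$ they become $A = X^*X + Y^*Y$ and $B = XX^* + YY^*$, giving
\[
w^2(T) \le \tfrac{1}{2}\max\bigl\{\|X^*X + Y^*Y\|,\ \|XX^* + YY^*\|\bigr\}.
\]
Chaining this with $\|X+Y\|^2 \le 4w^2(T)$ produces the desired inequality.

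No substantial obstacle arises; the only point requiring care is the placement of the adjoint in the off-diagonal entry of $T$, which is exactly what ensures that the substitution into Theorem \ref{theorem:upperlower1} yields precisely the two expressions $\|X^*X+Y^*Y\|$ and $\|XX^*+YY^*\|$ appearing in the statement (rather than mixed variants such as $\|XX^*+Y^*Y\|$).
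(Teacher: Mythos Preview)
Your proof is correct and follows essentially the same route as the paper: both reduce the statement to the inequality $\|X+Y\|^2 \le 4w^2(T)$ for an off-diagonal $2\times 2$ operator matrix and then invoke the upper bound of Theorem~\ref{theorem:upperlower1}. The only cosmetic differences are that the paper places the adjoint via a substitution $Y\mapsto Y^*$ at the end (starting from $T=\left(\begin{smallmatrix}0&X\\Y&0\end{smallmatrix}\right)$), whereas you build it into $T$ from the outset, and that the paper cites \cite{KS} for the bound $\|X+Y^*\|\le 2w(T)$ while you rederive it from Lemma~\ref{lemma:lemma1}.
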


\begin{proof}
Let $T=\left(\begin{array}{cc}
    0&X \\
    Y{^*}&0
 \end{array}\right).$ Then it follows from $w(T)\geq \|\textit{Re}(T)\|$ that  $$\|X+Y\|^2 \leq 4w^2(T).$$ Thus using Theorem \ref{theorem:upperlower1}, we get
$$\left\|X+Y\right \|^2\leq 2\max \big\{\| XX^{*}+YY^{*}\|, \| X^{*}X+Y^{*}Y\|\big\},$$
as required.
\end{proof}

In the next theorem we obtain a norm inequality for positive operators.

\begin{theorem} \label{therem3}
Let $X, Y$ be positive operators in $\mathcal{B}(\mathcal{H}).$ Then
\[\left\|X^{\frac{1}{2}}Y^{\frac{1}{2}}\right \|^2\leq \frac{1}{2}\left\| X^2+Y^2\right\|.\]
\end{theorem}
\begin{proof}
We have 
$\rho(XY)=\left\|X^{\frac{1}{2}}Y^{\frac{1}{2}}\right\|^2$. Also we have the operator inequality 
$$2\textit{Re}(e^{i\theta}AB) \leq AA^*+B^*B, ~ \mbox{for all} ~\theta \in \mathbb{R}$$ 
 and so $w(AB) \leq \frac{1}{2}\| AA^{*}+B^{*}B\|$, for all $A,B\in \mathcal{B}(\mathcal{H}).$ 
 Thus, 
 $$\rho(XY)=\left\|X^{\frac{1}{2}}Y^{\frac{1}{2}}\right\|^2 \leq w(XY) \leq \frac{1}{2} \left\| X^2+Y^2\right\|.$$
 This completes the proof of the theorem.
\end{proof}

In the following theorem we obtain another numerical radius inequalities for $2\times2$ operator matrices.

\begin{theorem}\label{theorem:upperlower2}
Let $X \in \mathcal{B}(\mathcal{H}_2,\mathcal{H}_1), Y \in \mathcal{B}(\mathcal{H}_1,\mathcal{H}_2)$. Then\\
$w^{4}\left(\begin{array}{cc}
	0&X \\
	Y&0
\end{array}\right)  \geq  \frac{1}{16}\max \big\{\| A_0\|, \| B_0\|\big\} ~\mbox{and}$\\ 
$w^{4}\left(\begin{array}{cc}
	0&X \\
	Y&0
\end{array}\right) \leq \frac{1}{8}\max \Big\{\| XX^{*}+Y^{*}Y\|^2+4w^2(XY), \| X^{*}X+YY^{*}\|^2  + 4w^2(YX)\Big\},$
where $A_0=(XX^{*}+Y^{*}Y)^2 + 4(\textit{Re}(XY))^2,B_0=(X^{*}X+YY^{*})^2 + 4(\textit{Re}(YX))^2.$

\end{theorem}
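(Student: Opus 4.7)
The plan is to mimic the block-matrix computation used in the proof of Theorem~\ref{theorem:upperlower1}, but now carried out on the squares of $H_\theta^2$ and $K_\theta^2$ rather than on $H_\theta^2+K_\theta^2$.  Setting $T=\left(\begin{array}{cc} 0 & X \\ Y & 0 \end{array}\right)$ and $P_\theta = e^{i\theta}X + e^{-i\theta}Y^*$, the identity $2H_\theta = e^{i\theta}T + e^{-i\theta}T^*$ presents $2H_\theta$ as the anti-diagonal block matrix with off-diagonal entries $P_\theta$ and $P_\theta^*$.  A direct expansion yields
\[ P_\theta P_\theta^* = (XX^*+Y^*Y) + 2\,\textit{Re}(e^{2i\theta}XY), \qquad P_\theta^* P_\theta = (X^*X+YY^*) + 2\,\textit{Re}(e^{2i\theta}YX), \]
so $4H_\theta^2 = \mathrm{diag}(P_\theta P_\theta^*,\, P_\theta^* P_\theta)$.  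This is the common starting point for both inequalities.

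For the upper bound I apply the self-adjoint operator inequality $(U+V)^2 \le 2(U^2+V^2)$ (a consequence of $(U-V)^2\ge 0$) with $U = XX^*+Y^*Y$ and $V = 2\,\textit{Re}(e^{2i\theta}XY)$.  Taking operator norms and using Lemma~\ref{lemma:lemma1} to bound $\|\textit{Re}(e^{2i\theta}XY)\|\le w(XY)$, one obtains
\[ \|P_\theta P_\theta^*\|^2 \le 2\bigl(\|XX^*+Y^*Y\|^2 + 4w^2(XY)\bigr), \]
and symmetrically for $\|P_\theta^* P_\theta\|^2$.  Since $H_\theta$ is self-adjoint, $\|H_\theta\|^4 = \|H_\theta^2\|^2 = \tfrac{1}{16}\max\{\|P_\theta P_\theta^*\|,\|P_\theta^*P_\theta\|\}^2$, so combining the two estimates and taking the supremum over $\theta$ via Lemma~\ref{lemma:lemma1} delivers the claimed upper bound for $w^4(T)$.

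For the lower bound I specialize to $\theta = 0$ and carry out the analogous computation for $K_0=\textit{Im}(T)$, which yields $4K_0^2 = \mathrm{diag}\bigl((XX^*+Y^*Y) - 2\,\textit{Re}(XY),\, (X^*X+YY^*) - 2\,\textit{Re}(YX)\bigr)$.  The parallelogram identity $(U+C)^2+(U-C)^2 = 2(U^2+C^2)$, applied block-wise with $C = 2\,\textit{Re}(XY)$ in the first block and $C = 2\,\textit{Re}(YX)$ in the second, then produces
\[ H_0^4 + K_0^4 = \tfrac{1}{8}\,\mathrm{diag}(A_0,\, B_0). \]
Combining this with $\|H_0^4+K_0^4\| \le \|H_0^4\| + \|K_0^4\| = \|H_0\|^4 + \|K_0\|^4 \le 2w^4(T)$, where the last inequality uses Lemma~\ref{lemma:lemma1}, gives $w^4(T) \ge \tfrac{1}{16}\max\{\|A_0\|,\|B_0\|\}$.

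The main obstacle, and the step where the shape of the statement essentially dictates the method, is recognising the parallelogram-law trick for the lower bound: it is precisely what converts the $\theta$-dependent, indefinite cross terms $\pm 4(XX^*+Y^*Y)\,\textit{Re}(XY)$ into the positive quantity $4(\textit{Re}(XY))^2$ appearing in $A_0$.  Once that is in hand, the remainder is routine block-matrix bookkeeping patterned on the proof of Theorem~\ref{theorem:upperlower1}.
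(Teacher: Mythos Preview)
Your argument is correct and is essentially the paper's proof. The paper packages both parts via the single identity $H_\theta^4+K_\theta^4=\tfrac18\,\mathrm{diag}(A,B)$ with $A=(XX^*+Y^*Y)^2+4(\textit{Re}(e^{2i\theta}XY))^2$ and the analogous $B$; your block-wise inequality $(U+V)^2\le 2(U^2+V^2)$ for the upper bound and your parallelogram computation for the lower bound are exactly what this identity encodes (since $K_\theta^2$ differs from $H_\theta^2$ by the sign of the cross term $2\,\textit{Re}(e^{2i\theta}XY)$), so the two write-ups are the same argument in slightly different notation.
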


\begin{proof} Let $T=\left(\begin{array}{cc}
    0&X \\
    Y&0
 \end{array}\right)$ and $H_{\theta}=\textit{Re}(e^{i\theta}T)$, $K_{\theta}=\textit{Im}(e^{i\theta}T).$ An easy calculation gives
\[H^4_{\theta} + K^4_{\theta}=\frac{1}{8}\left(\begin{array}{cc}
    A&0 \\
    0&B
 \end{array}\right),\]
where $A=(XX^{*}+Y^{*}Y)^2 + ~4(\textit{Re}(e^{2i\theta}XY))^2, B=(X^{*}X+YY^{*})^2 + ~4(\textit{Re}(e^{2i\theta}YX))^2.$
Therefore, using Lemma \ref{lemma:lemma1}, we get
$$\frac{1}{8} \Big \|\left(\begin{array}{cc}
    A&0 \\
    0&B
 \end{array}\right)\Big\|=\|H^4_{\theta}+K^4_{\theta}\|\leq \|H_{\theta}\|^4+\|K_{\theta}\|^4\leq 2w^4(T).$$ This shows that 
$$\frac{1}{8}\max \big\{\| A\|, \| B\|\big\}\leq 2w^4(T).$$ This holds for all $\theta \in \mathbb{R}$, so taking $\theta=0$, we get
$$\frac{1}{8}\max \big\{\| A_0\|, \| B_0\|\big\} \leq 2w^{4}(T).$$ 
 This completes the proof of the first inequality of the theorem.

\noindent Again, from $H^4_{\theta}+K^4_{\theta}=\frac{1}{8}\left(\begin{array}{cc}
    A&0 \\
    0&B
 \end{array}\right)$, we have
$$H^4_{\theta}-\frac{1}{8}\left(\begin{array}{cc}
    A&0 \\
    0&B
 \end{array}\right)=-K^4_{\theta}\leq 0.$$
Thus, $$ H^4_{\theta}\leq\frac{1}{8}\left(\begin{array}{cc}
    A&0 \\
    0&B
 \end{array}\right),$$ and so,
$$\|H_{\theta}\|^4\leq\frac{1}{8}\Big\|\left(\begin{array}{cc}
    A&0 \\
    0&B
 \end{array}\right)\Big\|=\frac{1}{8}\max \big\{\| A\|, \| B\|\big\}.$$
Hence,
$$\|H_{\theta}\|^4\leq\frac{1}{8}\max  \big\{\| XX^{*}+Y^{*}Y\|^2+4w^2(XY), \| X^{*}X+YY^{*}\|^2+4w^2(YX)\big\}.$$ 
Taking supremum over $\theta\in \mathbb{R}$ and using Lemma \ref{lemma:lemma1}, we get
$$w^{4}(T) \leq \frac{1}{8}\max  \big\{\| XX^{*}+Y^{*}Y\|^2+4w^2(XY), \| X^{*}X+YY^{*}\|^2+4w^2(YX)\big\}.$$  This completes the proof of the second inequality of the theorem. 
\end{proof}
\begin{remark}
	In \cite[Th. 2.9]{BPN}, we obtained analogous bounds for  $A$-numerical radius in the setting of  semi-Hilbertian space operators. 
	\end{remark}

Considering $\mathcal{H}_1=\mathcal{H}_2= \mathcal{H}$ and $X=Y=T$ (say) in Theorem \ref{theorem:upperlower2}, we get the following corollary.

\begin{corollary}\label{cor4}
Let $T\in \mathcal{B}(\mathcal{H}).$ Then
\begin{eqnarray*}
\frac{1}{16}\left\|\left(TT^{*}+T^{*}T\right)^2+4\left(\textit{Re}\left(T^2\right)\right)^2\right\| \leq  w^4(T) \leq  \frac{1}{8}\left\| TT^{*}+T^{*}T\right\|^2+\frac{1}{2}w^2(T^2). 
\end{eqnarray*}
\end{corollary}

\begin{remark}
It is well known that $2w(T^2)\leq \| TT^{*}+T^{*}T\|.$ Also it is easy to show that $\left \|\left(TT^{*}+T^{*}T\right)^2+4\left(\textit{Re}\left(T^2\right)\right)^2\right \|\geq \left \|TT^{*}+T^{*}T\right \|^2+4 m\left(\left(\mbox{Re}\left(T^2\right)\right)^2\right).$ Thus the inequality obtained by us in Corollary \ref{cor4} improves on the inequality (\ref{number1}), it also improves on the inequality (\ref{number2}).\\

\end{remark}

Next, by using Theorem \ref{theorem:upperlower2}, we prove another norm inequality for the sum of two bounded linear operators.

\begin{theorem} \label{theorem5}
	Let $X, Y \in \mathcal{B}(\mathcal{H}).$ Then
	\[\|X+Y\|^4\leq 2 \max  \big\{\| XX^{*}+YY^{*}\|^2+4w^2(XY^{*}), \| X^{*}X+Y^{*}Y\|^2+4w^2(Y^{*}X)\big\}.\]
\end{theorem}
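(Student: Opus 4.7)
The plan is to mirror the proof of Theorem \ref{theorem1}, but using the fourth-power bound in Theorem \ref{theorem:upperlower2} in place of the squared bound in Theorem \ref{theorem:upperlower1}. The key observation is that the $\|X+Y^*\|^2 \leq 4w^2(T)$ estimate for $T = \left(\begin{smallmatrix} 0 & X \\ Y & 0 \end{smallmatrix}\right)$, cited from \cite{KS} and already invoked in the proof of Theorem \ref{theorem1}, squares to $\|X+Y^*\|^4 \leq 16\, w^4(T)$. Thus any upper bound on $w^4(T)$ translates directly into an upper bound on $\|X+Y^*\|^4$.

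Concretely, I would proceed in three short steps. First, form $T = \left(\begin{smallmatrix} 0 & X \\ Y & 0 \end{smallmatrix}\right)$ and record $\|X+Y^*\|^4 \leq 16\, w^4(T)$. Second, apply the second inequality of Theorem \ref{theorem:upperlower2} to $T$, obtaining
$$w^4(T) \leq \tfrac{1}{8}\max\bigl\{\|XX^{*}+Y^{*}Y\|^2 + 4w^2(XY),\ \|X^{*}X+YY^{*}\|^2 + 4w^2(YX)\bigr\},$$
which combined with the previous step yields
$$\|X+Y^*\|^4 \leq 2\max\bigl\{\|XX^{*}+Y^{*}Y\|^2 + 4w^2(XY),\ \|X^{*}X+YY^{*}\|^2 + 4w^2(YX)\bigr\}.$$
Third, since $X, Y$ are arbitrary elements of $B(H)$, replace $Y$ by $Y^*$ throughout — exactly the reduction carried out at the end of the proof of Theorem \ref{theorem1}. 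Under $Y \mapsto Y^*$, the term $XX^{*}+Y^{*}Y$ becomes $XX^{*}+YY^{*}$, the term $X^{*}X+YY^{*}$ becomes $X^{*}X+Y^{*}Y$, and $w(XY)$, $w(YX)$ become $w(XY^*)$, $w(Y^*X)$ respectively, delivering the stated inequality.

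I do not expect a genuine obstacle here: the argument is a routine lift of the Theorem \ref{theorem1} template to the fourth-power setting now available through Theorem \ref{theorem:upperlower2}. The only care required is bookkeeping when performing the substitution $Y \mapsto Y^*$, so that the $w^2(XY)$ and $w^2(YX)$ summands pass correctly to $w^2(XY^*)$ and $w^2(Y^*X)$, and the adjoint products in the norm terms are matched with the ones in the conclusion.
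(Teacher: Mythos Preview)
Your proposal is correct and follows exactly the approach the paper indicates: it combines the estimate $\|X+Y^{*}\|^{2}\leq 4w^{2}(T)$ used in Theorem~\ref{theorem1} with the fourth-power bound of Theorem~\ref{theorem:upperlower2}, and then performs the same $Y\mapsto Y^{*}$ substitution. The bookkeeping you carry out under this substitution is accurate and yields precisely the stated inequality.
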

\begin{proof}
Let $T=\left(\begin{array}{cc}
    0&X \\
    Y{^*}&0
 \end{array}\right).$ Then it follows from $w(T)\geq \|\textit{Re}(T)\|$ that $$\|X+Y\|^4\leq 16 w^4(T).$$ Therefore, using Theorem \ref{theorem:upperlower2}, we get
\[\|X+Y\|^4\leq 2 \max  \big\{\| XX^{*}+YY^{*}\|^2+4w^2(XY^{*}), \| X^{*}X+Y^{*}Y\|^2+4w^2(Y^{*}X)\big\},\] as required.
\end{proof}

\begin{remark}
	Abu-Omar and Kittaneh \cite{AF} proved that for any operators $X,Y \in \mathcal{B}(\mathcal{H}),$
	\begin{eqnarray}\label{number4}
	\|X+Y\|&\leq& \max \{\|X\|, \|Y\|\}+\max\big\{\||X|^{\frac{1}{2}}|Y|^{\frac{1}{2}}\|,\||X^{*}|^{\frac{1}{2}}|Y^{*}|^{\frac{1}{2}}\|\big\}
	\end{eqnarray}
	and Shebrawi \cite{KS} improved on the inequality (\ref{number4}) to prove that
	\begin{eqnarray}\label{number5}
	\|X+Y\|&\leq& \max \{\|X\|, \|Y\|\}+\frac{1}{2}\big(\||X|^{\frac{1}{2}}|Y|^{\frac{1}{2}}\|+\||X^{*}|^{\frac{1}{2}}|Y^{*}|^{\frac{1}{2}}\|\big).
	\end{eqnarray}
We provide a numerical example to claim that the  bounds for the norm of the sum of two operators obtained in Theorem \ref{theorem1} and Theorem \ref{theorem5}
improve on the bound mentioned in (\ref{number5}).
Let $X=\left(\begin{array}{cc}
    2&0 \\
    0&0
	\end{array}\right)$ and $Y=\left(\begin{array}{cc}
    3&0 \\
    0&0
	\end{array}\right)$. Then the inequality (\ref{number5}) gives $\|X+Y\| \leq 3+\sqrt{6},$ whereas that  in Theorem \ref{theorem1} and Theorem \ref{theorem5} gives respectively $\|X+Y\| \leq \sqrt{26}$ and $\|X+Y\| \leq \sqrt{\sqrt{626}}.$ Thus our claim is established.
\end{remark}

\vspace{.3cm}

We conclude this section with two theorems related to upper and lower bounds for the numerical radius of general $2 \times 2$ operator matrices. To do so we need the following lemma, the proof of which is in  \cite[p. 107]{B}. 
\begin {lemma}\label{lemma:lemma3}
Let $X, Y, Z, W \in \mathcal{B}(\mathcal{H})$. Then

\[w\left(\begin{array}{cc}
    X&Y \\
    Z&W
	\end{array}\right)\geq w\left(\begin{array}{cc}
    0&Y \\
    Z&0
	\end{array}\right),
	w\left(\begin{array}{cc}
    X&Y \\
    Z&W
	\end{array}\right)\geq w\left(\begin{array}{cc}
    X&0 \\
    0&W
	\end{array}\right).\]
\end{lemma}
    
\begin{theorem}\label{theorem:estimate1}
Let $X, Y, Z, W \in \mathcal{B}(\mathcal{H})$. Then
\[w\left(\begin{array}{cc}
    X&Y \\
    Z&W
	\end{array}\right)\leq \max \bigg\{ w(X), w(W) \bigg\} + \bigg(\frac{1}{2}\max \bigg\{\| YY^{*}+Z^{*}Z\|, \| Y^{*}Y+ZZ^{*}\|\bigg\}\bigg)^{\frac{1}{2}},\]
	\[w\left(\begin{array}{cc}
    X&Y \\
    Z&W
	\end{array}\right)\geq \max \bigg\{ w(X), w(W), \bigg(\frac{1}{4}\max \bigg\{\| YY^{*}+Z^{*}Z\|, \| Y^{*}Y+ZZ^{*}\|\bigg\}\bigg)^{\frac{1}{2}} \bigg\}.\]
\end{theorem}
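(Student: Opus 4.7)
The plan is to write the operator matrix as the sum of its block-diagonal part and its block-off-diagonal part,
\[
T = \begin{pmatrix} X & Y \\ Z & W \end{pmatrix} = \begin{pmatrix} X & 0 \\ 0 & W \end{pmatrix} + \begin{pmatrix} 0 & Y \\ Z & 0 \end{pmatrix} =: D + N,
\]
and then estimate $w(D)$ and $w(N)$ separately, feeding them through the triangle inequality for the upper bound and through Lemma \ref{lemma:lemma3} for the lower bound. Throughout, I will use the fact already recorded in the paper (just before Theorem \ref{theorem2}) that $w(D) = \max\{w(X), w(W)\}$.

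For the upper bound I would invoke subadditivity of the numerical radius, which is valid because $w(\cdot)$ is a norm on $B(H \oplus H)$: this yields $w(T) \le w(D) + w(N)$. The first term equals $\max\{w(X), w(W)\}$, and to handle $w(N)$ I would apply the second inequality of Theorem \ref{theorem:upperlower1} with $Y$ and $Z$ playing the roles of $X$ and $Y$ in that theorem, giving
\[
w(N) \le \left(\tfrac{1}{2}\max\{\|YY^{*}+Z^{*}Z\|,\ \|Y^{*}Y+ZZ^{*}\|\}\right)^{1/2}.
\]
Adding the two estimates yields the claimed upper bound.

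For the lower bound I would call on Lemma \ref{lemma:lemma3}, which gives the two separate inequalities $w(T) \ge w(D)$ and $w(T) \ge w(N)$. Combining $w(D) = \max\{w(X), w(W)\}$ with the first inequality of Theorem \ref{theorem:upperlower1} applied to $N$, namely
\[
w(N) \ge \left(\tfrac{1}{4}\max\{\|YY^{*}+Z^{*}Z\|,\ \|Y^{*}Y+ZZ^{*}\|\}\right)^{1/2},
\]
and taking the maximum of the three resulting lower bounds produces exactly the stated inequality.

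There is no genuine obstacle here: the conceptual input (Lemma \ref{lemma:lemma3}, Theorem \ref{theorem:upperlower1}, subadditivity of $w$, and the block-diagonal identity for $w$) has all been established earlier in the paper, and the proof is essentially an assembly of these pieces via the $D+N$ decomposition. The only thing to be mildly careful about is matching the roles of the operators when invoking Theorem \ref{theorem:upperlower1}, so that the expressions $\|YY^{*}+Z^{*}Z\|$ and $\|Y^{*}Y+ZZ^{*}\|$ appear in the correct positions.
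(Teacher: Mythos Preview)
Your proposal is correct and matches the paper's approach: the paper's proof consists of the single sentence ``The proof follows easily from the Theorem \ref{theorem:upperlower1} and Lemma \ref{lemma:lemma3},'' and you have spelled out precisely the intended argument via the decomposition $T = D + N$, the triangle inequality for $w(\cdot)$, the identity $w(D)=\max\{w(X),w(W)\}$, and the two inequalities of Theorem \ref{theorem:upperlower1} applied to $N$.
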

\begin{proof}
The proof  follows easily from  Theorem \ref{theorem:upperlower1} and Lemma \ref{lemma:lemma3}.
\end{proof}

\begin{theorem}\label{theorem:estimate2}
Let $X, Y, Z, W \in \mathcal{B}(\mathcal{H})$. Then
\begin{eqnarray*}
w\left(\begin{array}{cc}
    X&Y \\
    Z&W
	\end{array}\right) &\leq & \max \bigg\{ w(X), w(W) \bigg \} + \bigg(\frac{1}{8}\max \big\{\alpha, \beta \big\}\bigg)^{\frac{1}{4}},\\ 
	w\left(\begin{array}{cc}
    X&Y \\
    Z&W
	\end{array}\right) &\geq & \max \bigg\{ w(X), w(W), \bigg(\frac{1}{16}\max \bigg\{\| A_0\|, \| B_0\|\bigg\}\bigg)^{\frac{1}{4}} \bigg\},
	\end{eqnarray*} 
	where 
	$$\alpha=\| YY^{*}+Z^{*}Z\|^2+4w^2(YZ),~~ \beta=\| Y^{*}Y+ZZ^{*}\|^2+4w^2(ZY),$$
	$$ A_0=\left(YY^{*}+Z^{*}Z\right)^2+4\left(\textit{Re}(YZ)\right)^2,~~B_0=\left(Y^{*}Y+ZZ^{*}\right)^2+4\left(\textit{Re}(ZY)\right)^2.$$
\end{theorem}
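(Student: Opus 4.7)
The plan is to mimic exactly the strategy used for Theorem~\ref{theorem:estimate1}, namely split the general $2\times 2$ operator matrix into its diagonal and anti-diagonal parts, and then apply the previously established results. Write
\[
T=\left(\begin{array}{cc} X & Y \\ Z & W \end{array}\right)=\left(\begin{array}{cc} X & 0 \\ 0 & W \end{array}\right)+\left(\begin{array}{cc} 0 & Y \\ Z & 0 \end{array}\right).
\]
The upper bound then comes from the sub-additivity of the numerical radius: the diagonal summand contributes $\max\{w(X),w(W)\}$, and the off-diagonal summand is estimated by the second inequality of Theorem~\ref{theorem:upperlower2}, which yields
\[
w\left(\begin{array}{cc} 0 & Y \\ Z & 0 \end{array}\right)\leq \Bigl(\tfrac{1}{8}\max\{\alpha,\beta\}\Bigr)^{1/4}.
\]
Adding these gives the desired upper bound.

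For the lower bound, I would invoke Lemma~\ref{lemma:lemma3} twice. The first inequality there gives $w(T)\geq w\!\left(\begin{smallmatrix} X & 0 \\ 0 & W \end{smallmatrix}\right)=\max\{w(X),w(W)\}$, while the second gives $w(T)\geq w\!\left(\begin{smallmatrix} 0 & Y \\ Z & 0 \end{smallmatrix}\right)$. Applying the first inequality of Theorem~\ref{theorem:upperlower2} to this last off-diagonal matrix then produces the lower bound $\bigl(\tfrac{1}{16}\max\{\|A_0\|,\|B_0\|\}\bigr)^{1/4}$. Taking the maximum of the three quantities $w(X)$, $w(W)$, and $\bigl(\tfrac{1}{16}\max\{\|A_0\|,\|B_0\|\}\bigr)^{1/4}$ completes the lower bound.

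There is essentially no obstacle here because the heavy lifting has already been done in Theorem~\ref{theorem:upperlower2}. The only things to double-check are that the constants $\alpha,\beta,A_0,B_0$ in the statement match exactly what Theorem~\ref{theorem:upperlower2} produces when applied with the letters $Y,Z$ playing the roles of $X,Y$ in that theorem, and that subadditivity of $w$ is used in the correct direction. Once that bookkeeping is verified, the proof is just a one-line citation of Theorem~\ref{theorem:upperlower2} and Lemma~\ref{lemma:lemma3}, in the spirit of the short proof given for Theorem~\ref{theorem:estimate1}.
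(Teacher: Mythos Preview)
Your proposal is correct and follows exactly the same approach as the paper: split $T$ into its diagonal and anti-diagonal parts, combine subadditivity of $w$ with Theorem~\ref{theorem:upperlower2} for the upper bound, and combine Lemma~\ref{lemma:lemma3} with Theorem~\ref{theorem:upperlower2} for the lower bound. The paper's own proof is in fact just the one-line citation you anticipate.
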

\begin{proof}
The proof  follows easily from  Theorem \ref{theorem:upperlower2} and Lemma \ref{lemma:lemma3}.
\end{proof}

\begin{remark}
We would like to remark that the first inequality of both Theorem \ref{theorem:estimate1} and Theorem \ref{theorem:estimate2} are valid even if we consider $X\in \mathcal{B}(\mathcal{H}_1), Y\in \mathcal{B}(\mathcal{H}_2,\mathcal{H}_1),Z\in \mathcal{B}(\mathcal{H}_1,\mathcal{H}_2), W\in \mathcal{B}(\mathcal{H}_2),$ where $\mathcal{H}_1,\mathcal{H}_2$ are Hilbert spaces.
\end{remark}

\section{On the bounds  for the zeros of a polynomial}

\noindent We begin this section with  some well-known bounds for the zeros of the polynomial $p(z).$  Let $\lambda$ be a zero of the polynomial  $p(z)$. Then \\

\noindent Cauchy \cite{CM}	proved that
 \begin{eqnarray}\label{zero1}
		    |\lambda| &\leq& 1+\max \left\{ |a_0|, |a_1|, \ldots, |a_{n-1}|\right\}.
\end{eqnarray}
Linden \cite{L} proved that 
\begin{eqnarray}\label{zero8}
   |\lambda| &\leq& \frac{|a_{n-1}|}{n}+\bigg[ \frac{n-1}{n} (n-1+\sum^{n-1}_{j=0}|a_j|^2-\frac{|a_{n-1}|^2}{n})\bigg]^{\frac{1}{2}}.
\end{eqnarray}
Kittaneh \cite{K} proved that 
\begin{eqnarray}\label{zero7}
   |\lambda| &\leq& \frac{1}{2}\left[|a_{n-1}|+1+\sqrt{(|a_{n-1}|-1)^2+4\sqrt{\sum^{n-2}_{j=0}|a_j|^2}}\right].
\end{eqnarray}
Abu-Omar and Kittaneh \cite{A} proved that 
\begin{eqnarray}\label{zero2}
|\lambda| &\leq& \frac{1}{2}\left[\beta+\sqrt{(\frac{1}{2}(|a_{n-1}|+\alpha)-\cos\frac{\pi}{n+1})^2+4\alpha' }\right],
\end{eqnarray}	
 where $\alpha=\sqrt{\sum_{j=0}^{n-1}|a_j|^2}$, $\alpha'=\sqrt{\sum_{j=0}^{n-2}|a_j|^2}$ and $\beta=\frac{1}{2}(|a_{n-1}|+\alpha)+\cos\frac{\pi}{n+1}$.\\
 Fujii and Kubo \cite{FK1} proved that 
	\begin{eqnarray}\label{zero3}
			|\lambda|&\leq& \cos\frac{\pi}{n+1}+\frac{1}{2}\left[\left(\sum_{j=0}^{n-1}|a_j|^2\right)^{\frac{1}{2}}+|a_{n-1}|\right].
	\end{eqnarray}			
 Alpin et al. \cite{YML} proved that
\begin{eqnarray}\label{zero4}
	     |\lambda| &\leq&  \max_{1\leq k \leq n}\left[(1+|a_{n-1}|)(1+|a_{n-2}|)\ldots(1+|a_{n-k}|)\right]^{\frac{1}{k}}.
\end{eqnarray}			
M. Al-Dolat et al. \cite{MKMF} proved that
\begin{eqnarray}\label{zero5}
        |\lambda| &\leq& \max \left\{ w(A), \cos\frac{\pi}{n+1}\right\} +\frac{1}{2} \left(1+\sqrt{{\sum_{j=0}^{n-3}|a_j|^2}} \right), 
\end{eqnarray}					
				where
$A=\left(\begin{array}{cc}
    -a_{n-1}&-a_{n-2} \\
    1&0
 \end{array}\right).$\\
 Bhunia et al. \cite{BBP} proved that 
\begin{eqnarray}\label{zero6}
   |\lambda| &\leq& \left( \frac{1}{2}w^2(C^2)+\frac{1}{4}\left\|(C^*C)^2+(CC^*)^2\right\|\right)^{\frac{1}{4}},
\end{eqnarray}			
where $C=C(p)$.\\
Carmichael and Mason \cite{CM} proved that 
\begin{eqnarray}\label{zero9}
		 |\lambda|\leq \left(1+|a_0|^2+|a_1|^2+\ldots+|a_{n-1}|^2\right)^{\frac{1}{2}}.
\end{eqnarray}

 As an application of the results obtained in section \ref{sec1}, we obtain some new bounds for the zeros of the polynomial $ p(z).$ To do so we need the following lemma.
\begin {lemma} [\cite{A}]\label{lemma:lemma4}
Let $D=\left(\begin{array}{ccccccc}
    0&0&.&.&.&0&0\\
		1&0&.&.&.&0&0\\
		0&1&.&.&.&0&0\\
    .& & & & & &\\
    .& & & & & &\\
    .& & & & & &\\
    0&0&.&.&.& 1&0
   \end{array}\right)_{n,n}.$  Then  $w(D)=\cos\frac{\pi}{n+1}$.
\end{lemma}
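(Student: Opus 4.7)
The plan is to exploit the weighted-shift structure of $D$ to reduce the computation of $w(D)$ to the operator norm of one real symmetric tridiagonal matrix whose spectrum is classically known.

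First, I would observe that $D$ is unitarily similar to $e^{i\theta}D$ for every real $\theta$ via the diagonal unitary $U_\theta = \mathrm{diag}(1,e^{-i\theta},e^{-2i\theta},\ldots,e^{-i(n-1)\theta})$. A direct entry computation, using that $D_{i+1,i}=1$ are the only nonzero entries, gives $U_\theta^{*}DU_\theta=e^{i\theta}D$, and consequently $U_\theta^{*}D^{*}U_\theta = e^{-i\theta}D^{*}$. Averaging these yields $H_\theta = U_\theta^{*}H_0 U_\theta$, so $\|H_\theta\|=\|H_0\|$ for every $\theta\in\mathbb{R}$. Combined with Lemma \ref{lemma:lemma1}, this collapses the supremum and gives $w(D)=\sup_{\theta\in\mathbb{R}}\|H_\theta\|=\|H_0\|$.

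Next, $H_0 = \tfrac12(D+D^{*})$ is the real symmetric tridiagonal matrix with zeros on the diagonal and $1/2$ on both the sub- and super-diagonal, so $\|H_0\|$ equals its spectral radius. It is classical that the $n\times n$ tridiagonal matrix with $1$'s on the off-diagonals and $0$'s on the diagonal has eigenvalues $2\cos\tfrac{k\pi}{n+1}$ for $k=1,\ldots,n$. Dividing by $2$, the eigenvalues of $H_0$ are $\cos\tfrac{k\pi}{n+1}$, so $\|H_0\|=\cos\tfrac{\pi}{n+1}$, which is the desired identity.

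The main obstacle is verifying the eigenvalue formula for the tridiagonal matrix. This can be done either by expanding $\det(\lambda I-2H_0)$ along the last row to obtain the three-term recurrence $p_n(\lambda)=\lambda p_{n-1}(\lambda)-p_{n-2}(\lambda)$, which coincides (after rescaling) with the recurrence for Chebyshev polynomials of the second kind, or by exhibiting explicit eigenvectors with components $v_j^{(k)}=\sin\tfrac{jk\pi}{n+1}$ and checking the eigenvalue equation directly. Once this classical fact is accepted, the proof is short; the real content is the diagonal-unitary trick that turns the supremum over $\theta$ into a single matrix-norm computation.
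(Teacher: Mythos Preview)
Your argument is correct. The diagonal-unitary trick $U_\theta^{*}DU_\theta=e^{i\theta}D$ is verified exactly as you say (the only nonzero entries sit on the subdiagonal, so conjugating by $U_\theta$ multiplies each of them by $e^{i\theta}$), and this indeed forces $\|H_\theta\|$ to be constant in $\theta$, reducing $w(D)$ to $\|H_0\|$. The identification of $H_0$ with the standard $n\times n$ Jacobi matrix having eigenvalues $\cos\frac{k\pi}{n+1}$ is a well-known fact, and your two suggested verifications (Chebyshev recurrence or the explicit sine eigenvectors) are both valid.

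As for comparison with the paper: there is nothing to compare. The paper does not prove this lemma at all; it is quoted with attribution to Abu-Omar and Kittaneh \cite{A} and used as a black box in the proofs of Theorems \ref{theorem:zero1} and \ref{theorem:zero3}. Your self-contained proof therefore goes beyond what the paper supplies. It also has the nice feature that it uses Lemma \ref{lemma:lemma1} (the Yamazaki characterization $w(T)=\sup_\theta\|H_\theta\|$), which is already available in the paper, so the argument fits naturally into the surrounding framework.
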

We now prove the following theorem.
\begin{theorem} \label{theorem:zero1}
Let $\lambda$ be any zero of $p(z).$ Then
\[|\lambda|\leq \max\left\{|a_{n-1}|,\cos\frac{\pi}{n}\right\}+\sqrt{\frac{1}{2}\left(1+\sum^n_{j=2}|a_{n-j}|^2\right)}.\]
\end{theorem}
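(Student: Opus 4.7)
The plan is to use the standard fact that every zero $\lambda$ of $p(z)$ is an eigenvalue of the Frobenius companion matrix $C(p)$, so $|\lambda|\le w(C(p))$, and then bound $w(C(p))$ by applying the first inequality of Theorem \ref{theorem:estimate1} after writing $C(p)$ as a suitably chosen $2\times 2$ block matrix. The key modelling choice is the block partition: put the single scalar entry $-a_{n-1}$ in the $X$-slot and leave the rest of the companion structure in the $W$-slot so that the shift-matrix calculation of Lemma \ref{lemma:lemma4} can be applied there.

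Concretely, I would decompose
\[C(p)=\left(\begin{array}{cc} X & Y \\ Z & W \end{array}\right)\]
with $X=(-a_{n-1})$ of size $1\times 1$, $Y=(-a_{n-2},-a_{n-3},\ldots,-a_{1},-a_{0})$ of size $1\times (n-1)$, $Z=(1,0,\ldots,0)^{T}$ of size $(n-1)\times 1$, and $W$ the $(n-1)\times (n-1)$ lower shift matrix from Lemma \ref{lemma:lemma4}. Then $w(X)=|a_{n-1}|$ and $w(W)=\cos\frac{\pi}{n}$, so $\max\{w(X),w(W)\}=\max\{|a_{n-1}|,\cos\frac{\pi}{n}\}$, matching the first summand of the claimed bound. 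The remark following Theorem \ref{theorem:estimate2} ensures that the first inequality of Theorem \ref{theorem:estimate1} still applies even though $X$ and $W$ act on spaces of different dimension.

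For the off-diagonal contribution, observe that $YY^{*}$ and $Z^{*}Z$ are scalars, giving
\[\|YY^{*}+Z^{*}Z\|=1+\sum_{j=2}^{n}|a_{n-j}|^{2},\]
while $Y^{*}Y$ and $ZZ^{*}$ are rank-one positive operators of norms $\|Y\|^{2}=\sum_{j=2}^{n}|a_{n-j}|^{2}$ and $\|Z\|^{2}=1$ respectively, so the triangle inequality gives $\|Y^{*}Y+ZZ^{*}\|\le 1+\sum_{j=2}^{n}|a_{n-j}|^{2}$ as well. Consequently
\[\max\big\{\|YY^{*}+Z^{*}Z\|,\,\|Y^{*}Y+ZZ^{*}\|\big\}\le 1+\sum_{j=2}^{n}|a_{n-j}|^{2}.\]
Substituting these estimates into Theorem \ref{theorem:estimate1} and then invoking $|\lambda|\le w(C(p))$ yields the stated inequality. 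There is no real obstacle: the work is entirely in choosing the right block partition so that the $W$-block is precisely the shift matrix of Lemma \ref{lemma:lemma4}, after which the norm estimates reduce to one-line applications of $\|v\|^{2}=vv^{*}$ and the triangle inequality.
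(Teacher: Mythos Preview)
Your proposal is correct and follows essentially the same route as the paper: the same $1\times 1$ versus $(n-1)\times(n-1)$ block partition of $C(p)$, Lemma \ref{lemma:lemma4} for $w(W)=\cos\frac{\pi}{n}$, and Theorem \ref{theorem:estimate1} to combine the diagonal and off-diagonal contributions. The only cosmetic difference is that the paper bounds both norm terms at once by the triangle inequality $\|B^{*}B+CC^{*}\|,\|BB^{*}+C^{*}C\|\le\|B\|^{2}+\|C\|^{2}$, whereas you compute the scalar block exactly and use the triangle inequality on the other; the end result is the same.
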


\begin{proof}
Let $C(p)=\left(\begin{array}{cc}
   A&B \\
   C&D
 \end{array}\right),$\\ where $A=(-a_{n-1})_{1,1}$,  $B=(-a_{n-2} ~ -a_{n-3}~~ \ldots~~ -a_{1} ~ -a_{0})_{1,n-1}$,\\  $C^*=(1 ~0 ~\ldots ~0~ 0)_{1,n-1} $ and $D=\left(\begin{array}{ccccccc}
    0&0&.&.&.&0&0\\
		1&0&.&.&.&0&0\\
		0&1&.&.&.&0&0\\
    .& & & & & &\\
    .& & & & & &\\
    .& & & & & &\\
    0&0&.&.&.& 1&0
   \end{array}\right)_{n-1,n-1}.$\\
Therefore, using Lemma \ref{lemma:lemma4} and Theorem \ref{theorem:estimate1}, we get
\begin{eqnarray*}
w(C(p))&\leq& \max\bigg\{|a_{n-1}|,\cos\frac{\pi}{n}\bigg\}+\sqrt{\frac{1}{2}\max\bigg\{ \|B^{*}B+CC^{*}\|,\|BB^{*}+C^{*}C\|\bigg\}}\\
       &\leq& \max\bigg\{|a_{n-1}|,\cos\frac{\pi}{n}\bigg\}+\sqrt{\frac{1}{2}\bigg(\|B\|^2+\|C\|^2\bigg)}.
\end{eqnarray*}	
Thus,	\[|\lambda|\leq \max\bigg\{|a_{n-1}|,\cos\frac{\pi}{n}\bigg\}+\sqrt{\frac{1}{2}\bigg(1+\sum^n_{j=2}|a_{n-j}|^2\bigg)}.\]
This completes the proof of the theorem.	
\end{proof}

Similarly using Theorem \ref{theorem:estimate2}, we can prove the following theorem.
\begin{theorem}\label{theorem:zero2}
Let $\lambda$ be any zero of $p(z).$ Then
\[|\lambda|\leq \max\bigg\{|a_{n-1}|,\cos\frac{\pi}{n}\bigg\}+\left[\frac{1}{8}\bigg(1+\sum^n_{j=2}|a_{n-j}|^2\bigg)^2+\frac{1}{2}\sum^n_{j=2}|a_{n-j}|^2\right]^{\frac{1}{4}}.\]
\end{theorem}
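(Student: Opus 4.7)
The plan is to adapt the block decomposition used in the proof of Theorem \ref{theorem:zero1} and feed it into the sharper estimate provided by Theorem \ref{theorem:estimate2} in place of Theorem \ref{theorem:estimate1}. Concretely, write the Frobenius companion matrix as
\[
C(p)=\left(\begin{array}{cc} A & B \\ C & D \end{array}\right),
\]
with $A=(-a_{n-1})$, $B=(-a_{n-2},-a_{n-3},\ldots,-a_{0})$, $C^{t}=(1,0,\ldots,0)$, and $D$ the $(n-1)\times(n-1)$ shift block appearing in Lemma \ref{lemma:lemma4}. Since every zero $\lambda$ of $p$ is an eigenvalue of $C(p)$, we have $|\lambda|\le w(C(p))$, so it suffices to control the right-hand side.

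Applying the upper bound in Theorem \ref{theorem:estimate2} to this decomposition yields
\[
w(C(p))\le \max\{w(A),w(D)\}+\left(\tfrac{1}{8}\max\{\alpha,\beta\}\right)^{1/4},
\]
where $\alpha=\|BB^{*}+C^{*}C\|^{2}+4w^{2}(BC)$ and $\beta=\|B^{*}B+CC^{*}\|^{2}+4w^{2}(CB)$. The first summand evaluates immediately: $w(A)=|a_{n-1}|$ since $A$ is a scalar, and Lemma \ref{lemma:lemma4} applied in dimension $n-1$ gives $w(D)=\cos(\pi/n)$, so $\max\{w(A),w(D)\}=\max\{|a_{n-1}|,\cos(\pi/n)\}$.

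For the bracketed term I would estimate $\alpha$ and $\beta$ by the same expression. The triangle inequality gives $\|BB^{*}+C^{*}C\|$ and $\|B^{*}B+CC^{*}\|$ each at most $\|B\|^{2}+\|C\|^{2}=1+\sum_{j=2}^{n}|a_{n-j}|^{2}$. For the numerical-radius terms I would use the crude bound $w(XY)\le\|X\|\,\|Y\|$, which yields $w^{2}(BC),\,w^{2}(CB)\le\|B\|^{2}\|C\|^{2}=\sum_{j=2}^{n}|a_{n-j}|^{2}$. Substituting these four estimates and pulling the factor $\tfrac{1}{8}$ through, the expression under the fourth root becomes exactly $\tfrac{1}{8}(1+\sum_{j=2}^{n}|a_{n-j}|^{2})^{2}+\tfrac{1}{2}\sum_{j=2}^{n}|a_{n-j}|^{2}$, matching the statement.

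I do not expect a serious technical obstacle, since this is essentially a bookkeeping exercise in the same spirit as Theorem \ref{theorem:zero1}. The only point requiring care is the treatment of the mixed products: $BC$ collapses to the scalar $-a_{n-2}$, while $CB$ is a rank-one block whose numerical radius is not transparent. The \emph{main} temptation is to compute these exactly (which would give a sharper but uglier bound); instead one must content oneself with the coarse inequality $w(XY)\le\|X\|\|Y\|$ in order to recover the clean closed form advertised by the theorem.
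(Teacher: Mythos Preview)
Your proposal is correct and follows exactly the route the paper indicates: the same $1\times 1$/$(n-1)\times(n-1)$ block decomposition of $C(p)$ as in Theorem \ref{theorem:zero1}, now fed into Theorem \ref{theorem:estimate2} together with Lemma \ref{lemma:lemma4}, and the same coarse estimates $\|B^{*}B+CC^{*}\|,\|BB^{*}+C^{*}C\|\le\|B\|^{2}+\|C\|^{2}$ and $w(BC),w(CB)\le\|B\|\|C\|$ to arrive at the stated bound. Your side remark that $BC=-a_{n-2}$ and $CB$ is rank one is accurate but, as you note, unnecessary for the advertised inequality.
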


We next illustrate with a numerical example  that the  bounds obtained  in Theorem \ref{theorem:zero1} and Theorem \ref{theorem:zero2} are stronger than the existing bounds. 
	\begin{example}
Consider a polynomial $p(z)=z^5+z^4+z^3+z^2+z+3$. Then the upper bounds for the zeros of this polynomial $p(z)$ estimated by different methods are as shown in the following table.
	
	\begin{center}
\begin{tabular}{ |c|c| } 
\hline
Abu-Omar and Kittaneh \cite{A} & 3.579 \\
 \hline
M. Al-Dolat et. al. \cite{MKMF} &  3.776\\ 
\hline
Cauchy \cite{CM} & 4.000  \\ 
 \hline
Lindon \cite{L} &   3.866\\ 
 \hline
Carmichael and Mason \cite{CM}& 3.741 \\
\hline
 \end{tabular}
\end{center}
But if $\lambda$ is a zero of this polynomial $p(z)$, then  Theorem \ref{theorem:zero1} gives $|\lambda| \leq 3.549$ and Theorem \ref{theorem:zero2} gives $|\lambda| \leq 3.292,$ which are stronger than all the estimations mentioned above.
\end{example}

Using Theorem \ref{theorem:estimate1}, we obtain another bound for the zeros of the polynomial $p(z).$

\begin{theorem}\label{theorem:zero3}
Let $\lambda$ be any zero of $p(z)$. Then
\begin{eqnarray*} 
 |\lambda|&\leq& \left|\frac{a_{n-1}}{n}\right|+\cos\frac{\pi}{n}+\sqrt{\frac{1}{2}(1+\alpha)},  
\end{eqnarray*}
where 
\begin{eqnarray*}
\alpha_{r}&=&\sum^{n}_{k=r} {^k}C_r\big(-\frac{a_{n-1}}{n}\big)^{k-r} a_{k},  ~~ r=0,1,\ldots, n-2, a_n=1, {^0}C_0=1, \\
\alpha&=&\sum^{n}_{j=2}|\alpha_{n-j}|^2.
\end{eqnarray*}
\end{theorem}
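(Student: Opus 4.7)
The strategy is to make a Tschirnhaus-style linear shift that kills the coefficient of $z^{n-1}$, apply Theorem \ref{theorem:zero1} to the shifted polynomial, and then recover a bound on $|\lambda|$ by the triangle inequality. Concretely, setting $a_n = 1$ and writing $p(z)=\sum_{k=0}^{n} a_k z^k$, I would introduce
\[
q(w) \;=\; p\!\left(w-\frac{a_{n-1}}{n}\right) \;=\; \sum_{k=0}^{n} a_k\!\left(w-\frac{a_{n-1}}{n}\right)^k.
\]
Expanding each power via the binomial theorem and collecting the coefficient of $w^r$ yields
\[
q(w)\;=\;\sum_{r=0}^{n}\alpha_r\, w^r,\qquad \alpha_r=\sum_{k=r}^{n}\binom{k}{r}\!\left(-\frac{a_{n-1}}{n}\right)^{k-r}\!a_k,
\]
so $q$ is monic of degree $n$ with exactly the coefficients defined in the statement. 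The computation also gives $\alpha_{n-1} = a_{n-1} - \binom{n}{n-1}(a_{n-1}/n)=0$, i.e.\ the shift has eliminated the $(n-1)$-th coefficient as intended.

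Next, if $\lambda$ is any zero of $p$, then $\mu:=\lambda + a_{n-1}/n$ is a zero of $q$. Applying Theorem \ref{theorem:zero1} to the monic polynomial $q$, whose coefficients are $\alpha_0,\alpha_1,\dots,\alpha_{n-2},0,1$, I obtain
\[
|\mu|\;\le\;\max\!\left\{|\alpha_{n-1}|,\cos\frac{\pi}{n}\right\}+\sqrt{\tfrac{1}{2}\!\left(1+\sum_{j=2}^{n}|\alpha_{n-j}|^{2}\right)}
\;=\;\cos\frac{\pi}{n}+\sqrt{\tfrac{1}{2}(1+\alpha)},
\]
since $\alpha_{n-1}=0$ and $\alpha=\sum_{j=2}^{n}|\alpha_{n-j}|^{2}$ by definition. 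The triangle inequality then finishes:
\[
|\lambda|\;=\;\left|\mu-\frac{a_{n-1}}{n}\right|\;\le\;\left|\frac{a_{n-1}}{n}\right|+|\mu|\;\le\;\left|\frac{a_{n-1}}{n}\right|+\cos\frac{\pi}{n}+\sqrt{\tfrac{1}{2}(1+\alpha)}.
\]

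The only step requiring care is the binomial identification of the coefficients $\alpha_r$, in particular checking that the signs and index ranges match the statement and that $\alpha_{n-1}$ indeed vanishes; everything else is a direct appeal to Theorem \ref{theorem:zero1} together with the triangle inequality, so I do not anticipate any genuine obstacle beyond this bookkeeping.
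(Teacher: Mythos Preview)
Your proof is correct and follows essentially the same approach as the paper: both perform the Tschirnhaus shift $z\mapsto z-a_{n-1}/n$ to kill the subleading coefficient and then bound the zeros of the shifted polynomial via the companion-matrix numerical radius estimate. The only cosmetic difference is that you invoke Theorem~\ref{theorem:zero1} directly on $q$, whereas the paper re-runs the companion-matrix argument (Theorem~\ref{theorem:estimate1} together with Lemma~\ref{lemma:lemma4}) from scratch for $C(q)$; your shortcut is slightly cleaner but the mathematical content is identical.
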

\begin{proof}
First we put $z=\eta -\frac{a_{n-1}}{n}$ in the polynomial $p(z).$ Then, we get a polynomial $$q(\eta)=\eta^n+\alpha_{n-2}\eta^{n-2}+\alpha_{n-3}\eta^{n-3}+\ldots +\alpha_{1}\eta+\alpha_{0},$$  where $\alpha_{r}=\sum^{n}_{k=r} {^k}C_r\big(-\frac{a_{n-1}}{n}\big)^{k-r} a_{k},  ~~ r=0,1,\ldots, n-2$, $a_n=1$ and $^{0}C_0=1$.

\noindent Therefore, the Frobenius companion matrix for the polynomial $q(\eta)$ is $C(q)=\left(\begin{array}{cc}
    A&B \\
    C&D
	\end{array}\right),$ where  $A=(0)_{1,1}$,  $B=(-\alpha_{n-2} ~~ -\alpha_{n-3}~~ \ldots~~ -\alpha_{1} ~~ -\alpha_{0})_{1,n-1}$,  $C^*=(1 ~~0 ~~\ldots ~~0~~ 0)_{1,n-1} $ and  $D=\left(\begin{array}{ccccccc}
    0&0&.&.&.&0&0\\
		1&0&.&.&.&0&0\\
		0&1&.&.&.&0&0\\
    .& & & & & &\\
    .& & & & & &\\
    .& & & & & &\\
    0&0&.&.&.& 1&0
   \end{array}\right)_{n-1,n-1}.$\\
Using Lemma \ref{lemma:lemma4} and Theorem \ref{theorem:estimate1}, we get
\begin{eqnarray*}
w(C(q))&\leq& \cos\frac{\pi}{n}+\big(\frac{1}{2}\max\big\{ \|B^{*}B+CC^{*}\|,\|BB^{*}+C^{*}C\|\big\}\big)^{\frac{1}{2}}\\
       &\leq& \cos\frac{\pi}{n}+\big(\frac{1}{2}\big(\|B\|^2+\|C\|^2\big)\big)^{\frac{1}{2}}.
\end{eqnarray*}	
Therefore, if $\eta$ is any zero of the polynomial $q(\eta)$ then
\[|\eta|\leq \cos\frac{\pi}{n}+\bigg(\frac{1}{2}\big(\|B\|^2+\|C\|^2\big)\bigg)^{\frac{1}{2}}.\]
Thus, we get \[|\lambda|\leq \left|\frac{a_{n-1}}{n}\right|+\cos\frac{\pi}{n}+\sqrt{\frac{1}{2}(1+\alpha)}.\]
This completes the proof of the theorem.
\end{proof}

Similarly using Theorem \ref{theorem:estimate2}, we obtain another bound  for the zeros of the  polynomial $ p(z).$

\begin{theorem}\label{theorem:zero4}
Let $\lambda$ be any zero of $p(z)$. Then

\begin{eqnarray*} 
 |\lambda|&\leq& \left|\frac{a_{n-1}}{n}\right|+\cos\frac{\pi}{n}+\bigg[\frac{1}{8}(1+\alpha)^2+\frac{1}{2}\alpha\bigg]^{\frac{1}{4}},  
\end{eqnarray*}
where 
\begin{eqnarray*}
\alpha_{r}&=&\sum^{n}_{k=r} {^k}C_r\big(-\frac{a_{n-1}}{n}\big)^{k-r} a_{k},  ~~ r=0,1,\ldots, n-2, a_n=1, {^0}C_0=1, \\
\alpha&=&\sum^{n}_{j=2}|\alpha_{n-j}|^2.
\end{eqnarray*}
\end{theorem}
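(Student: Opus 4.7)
The approach mirrors the proof of Theorem \ref{theorem:zero3}, with the sharper Theorem \ref{theorem:estimate2} used in place of Theorem \ref{theorem:estimate1}. First, I would apply the Tschirnhaus-type substitution $z = \eta - a_{n-1}/n$ to $p(z)$, yielding a depressed polynomial $q(\eta) = \eta^n + \alpha_{n-2}\eta^{n-2} + \cdots + \alpha_0$ whose coefficients $\alpha_r$ are exactly those appearing in the statement; a straightforward binomial expansion confirms both the formula and the vanishing of the $\eta^{n-1}$ coefficient. The Frobenius companion matrix $C(q)$ then splits as a $2 \times 2$ block matrix with diagonal blocks $A = (0)_{1,1}$ and the $(n-1) \times (n-1)$ subdiagonal shift $D$, and with off-diagonal blocks $B = (-\alpha_{n-2}, \ldots, -\alpha_0)$ and $C^t = (1, 0, \ldots, 0)$. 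By Lemma \ref{lemma:lemma4}, $w(D) = \cos(\pi/n)$, and trivially $w(A) = 0$.

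Next, I would apply the first (upper-bound) inequality of Theorem \ref{theorem:estimate2} to $C(q)$; this reduces the task to estimating $\|BB^* + C^*C\|$, $\|B^*B + CC^*\|$, $w(BC)$, and $w(CB)$. Since $B$ is a row vector of length $n-1$ and $C$ is a column of length $n-1$, the quantities $BB^*$, $C^*C$ and $BC$ are scalars: $BB^* = \|B\|^2 = \alpha$, $C^*C = 1$, and $BC = -\alpha_{n-2}$. Thus $\|BB^* + C^*C\| = 1 + \alpha$ and $w^2(BC) = |\alpha_{n-2}|^2 \leq \alpha$. For the remaining quantities, the triangle inequality gives $\|B^*B + CC^*\| \leq \|B\|^2 + \|C\|^2 = 1 + \alpha$, while $\|CB\|^2 = \|B^*C^*CB\| = \|B^*B\| = \alpha$ because $C^*C = 1$, and hence $w^2(CB) \leq \|CB\|^2 = \alpha$.

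Combining these bounds, the maximum appearing in Theorem \ref{theorem:estimate2} is at most $(1+\alpha)^2 + 4\alpha$, and therefore
\[
w(C(q)) \;\leq\; \cos(\pi/n) + \left[\tfrac{1}{8}(1+\alpha)^2 + \tfrac{1}{2}\alpha\right]^{1/4}.
\]
To finish, if $\lambda$ is any root of $p(z)$ then $\eta = \lambda + a_{n-1}/n$ is a root of $q(\eta)$, so $|\eta| \leq \rho(C(q)) \leq w(C(q))$; the triangle inequality $|\lambda| \leq |a_{n-1}/n| + |\eta|$ then yields the stated bound. I do not foresee a genuine obstacle; the only mildly delicate point is the control of $w(CB)$, which is immediate once one notices the rank-one structure of $CB$ together with $C^*C = 1$.
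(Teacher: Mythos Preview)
Your proposal is correct and follows exactly the route the paper intends: the paper itself gives no explicit proof of this theorem, merely remarking that it is obtained ``similarly using Theorem~\ref{theorem:estimate2}'' in place of Theorem~\ref{theorem:estimate1} in the argument for Theorem~\ref{theorem:zero3}. Your write-up supplies precisely those details, including the clean observation that $BB^{*}$, $C^{*}C$, and $BC$ are scalars and that $\|CB\|^{2}=\|B^{*}B\|=\alpha$ via $C^{*}C=1$, which makes the required estimate $\max\{\|BB^{*}+C^{*}C\|^{2}+4w^{2}(BC),\ \|B^{*}B+CC^{*}\|^{2}+4w^{2}(CB)\}\leq (1+\alpha)^{2}+4\alpha$ immediate.
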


We again  provide a numerical example to show that the  bounds obtained  in Theorem \ref{theorem:zero3} and Theorem \ref{theorem:zero4} are stronger than the existing bounds. 
	\begin{example}
	Consider a  polynomial $p(z)=z^5+2z^4+2z^3+z^2+2z+2$. Then the upper bounds  for  the zeros of this polynomial $p(z)$ estimated by different methods are as shown in the following table.
	\end{example}
	\begin{center}
\begin{tabular}{ |c|c| } 
\hline
Abu-Omar and Kittaneh \cite{A} & 4.157 \\
\hline
Alpin et al. \cite{YML} & 3.000 \\
\hline
Bhunia et al. \cite{BBP} & 3.183 \\
\hline
Fujii and Kubo \cite{FK1} & 3.927  \\
	\hline 
Cauchy \cite{CM} &  3.000 \\ 
 \hline
Lindon \cite{L} &  4.419 \\ 
 \hline
Kittaneh \cite{K}& 3.463 \\
\hline
\end{tabular}
\end{center}
But if $\lambda$ is a zero of this polynomial $p(z),$ then  Theorem \ref{theorem:zero3} gives $|\lambda| \leq 2.933$ and Theorem \ref{theorem:zero4} gives $|\lambda| \leq 2.829,$ which are stronger than all the bounds mentioned above.

\begin{acknowledgements}
We would like to thank the referee for his/her helpful suggestions.
Mr. Pintu Bhunia would like to thank UGC, Govt. of India for the financial support in the form of SRF. Prof. Kallol Paul would like to thank RUSA 2.0, Jadavpur University for the partial support.
\end{acknowledgements}

\bibliographystyle{amsplain}

\end{document}